\DeclareMathOperator{\lcm}{lcm}
\renewcommand{\phi}[0]{\varphi}
\renewcommand{\theta}[0]{\vartheta}
\renewcommand{\epsilon}[0]{\varepsilon}
\newcommand{\N}{\text{$\mathbb{N}$}}
\newcommand{\Z}{\text{$\mathbb{Z}$}}
\newcommand{\Pro}{\text{$\mathbf{P}^1$}}
\newcommand{\F}{\text{$\mathbb{F}$}}
\newcommand{\Fm}{\text{$\mathbb{F}_{2^m}$}}
\newcommand{\Fn}{\text{$\mathbb{F}_{2^n}$}}
\newcommand{\Fq}{\text{$\mathbb{F}_q$}}
\newcommand{\Fqt}{\text{$\mathbb{F}_{q^2}$}}
\newcommand{\cFn}{\text{$\overline{\mathbb{F}_{2^n}}$}}
\newcommand{\cFq}{\text{$\overline{\mathbb{F}_{q}}$}}
\newtheorem{theorem}{Theorem}[section]
\newtheorem{lemma}[theorem]{Lemma}
\newtheorem{proposition}[theorem]{Proposition}
\theoremstyle{definition}
\newtheorem{example}[theorem]{Example}
\theoremstyle{remark}
\newtheorem{remark}[theorem]{Remark}
\numberwithin{equation}{section}
\begin{document}

\bibliographystyle{amsplain}

\date{}

\title[On the maps $ax^{2^k}+b$ and $(a x^{2^k} + b)^{-1}$]
{On the iterations of  the maps $ax^{2^k}+b$ and $(a x^{2^k} + b)^{-1}$ over finite fields of characteristic two}

\author{S.~Ugolini}
\email{s.ugolini@unitn.it} 

\keywords{Arithmetic dynamical systems; finite fields; supersingular elliptic curves}

\subjclass[2010]{37P05, 37P25}

\begin{abstract}  
The maps $x \mapsto ax^{2^k}+b$ defined over finite fields of characteristic two can be related to the duplication map over binary supersingular elliptic curves. Relying upon the structure of the group of rational points of such curves we can describe the possible cycle lengths of the maps. Then we extend our investigation to the maps $x \mapsto (ax^{2^k}+b)^{-1}$. We also notice some relations between these latter maps and the polynomials $x^{2^k+1} + x +a$, which have been extensively studied in literature.  
\end{abstract}

\maketitle
\section{Introduction}\label{sec_intro}
In a paper \cite{blu} appeared in 2004,  Bluher studied the polynomials $x^{q+1} + a x + b$ defined over finite fields of characteristic $p$, with $q$ a power of $p$. The author studied the splitting field of such polynomials and other related problems, such as the number of roots in a given field. Later, other authors concentrated on such polynomials over finite fields of characteristic two.   

Let $P_a (x) := x^{2^k+1} + x + a$, where $k$ is a positive integer and $a \in \F^*_{2^n}$ for some positive integer $n$. In \cite{blu} the following result is proved.

\begin{theorem}
For any $a \in \F^*_{2^n}$ and a positive integer $k$, the polynomial $P_a(x)$ has either zero, one, two or $2^{\gcd(k,n)}+1$ zeros in $\Fn$.
\end{theorem}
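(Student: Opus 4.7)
My plan is to reduce the counting of roots of $P_a$ in $\Fn$ to counting solutions of an affine $\Ftwo$-linearised equation, where the dichotomy for the kernel size will be controlled by the classical identity $\gcd(2^k-1,\,2^n-1)=2^{\gcd(k,n)}-1$.

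First I would dispose of the ``zero roots'' case. Otherwise I fix a root $x_1\in\Fn$; since $P_a(1)=a\neq 0$, we have $x_1\neq 1$ and hence $x_1^{2^k}+1\neq 0$. For every other root $x\in\Fn$, set $y=x+x_1\in\Fn^*$. Using $(x_1+y)^{2^k}=x_1^{2^k}+y^{2^k}$ in characteristic two and cancelling via $P_a(x_1)=0$, the equation $P_a(x_1+y)=0$ factors as $y\bigl(x_1^{2^k}+x_1 y^{2^k-1}+y^{2^k}+1\bigr)=0$. Since $y\neq 0$, the second factor vanishes; substituting $s=1/y$ and dividing by $x_1^{2^k}+1$ then puts this into the affine linearised form
\[
L(s):=s^{2^k}+\alpha s=\beta, \qquad \alpha=\frac{x_1}{x_1^{2^k}+1},\quad \beta=\frac{1}{x_1^{2^k}+1}\neq 0.
\]

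Next I would analyse the additive map $L$ on $\Fn$. Its kernel consists of $0$ together with those $s\in\Fn^*$ satisfying $s^{2^k-1}=\alpha$; the nonzero fibres of $s\mapsto s^{2^k-1}$ on the cyclic group $\Fn^*$ have size $\gcd(2^k-1,2^n-1)=2^{\gcd(k,n)}-1$, so $|\ker L\cap\Fn|\in\{1,\,2^{\gcd(k,n)}\}$. When this kernel is trivial, $L$ is a bijection of $\Fn$ and $L(s)=\beta$ has exactly one solution; otherwise the equation has $0$ or $2^{\gcd(k,n)}$ solutions depending on whether $\beta$ lies in the image of $L$. Because $\beta\neq 0$, $s=0$ is never a solution, so the admissible $s\in\Fn^*$ number $0$, $1$, or $2^{\gcd(k,n)}$; since $s\mapsto x_1+1/s$ is a bijection between these and the roots of $P_a$ in $\Fn\setminus\{x_1\}$, the total root count lies in $\{1,\,2,\,2^{\gcd(k,n)}+1\}$, and combined with the no-root case this exhausts the four possibilities claimed.

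The main obstacle is identifying the right substitution: the equation in $y$ is a non-additive polynomial of degree $2^k$ with no evident symmetry, but the normalisation $s=1/y$ followed by rescaling produces an $\Ftwo$-linearised equation whose kernel is rigidly controlled by the cyclic structure of $\Fn^*$, and it is precisely this control that forces the value $2^{\gcd(k,n)}+1$ in the statement.
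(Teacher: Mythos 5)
Your argument is correct, but there is nothing in the paper to compare it against: the paper states this result only as a recalled theorem from Bluher's 2004 article \cite{blu} and gives no proof of it, using it merely as motivation for the dynamical study of $x \mapsto (ax^{2^k}+b)^{-1}$. What you have produced is therefore an independent, self-contained proof of the quoted statement, and it holds up under scrutiny: the factorisation $P_a(x_1+y)=y\bigl(x_1^{2^k}+x_1y^{2^k-1}+y^{2^k}+1\bigr)$ is correct once $P_a(x_1)=0$ is used; $x_1\neq 1$ (since $P_a(1)=a\neq 0$) guarantees $x_1^{2^k}+1\neq 0$ so the division is legitimate; $x_1\neq 0$ (since $P_a(0)=a\neq 0$) guarantees $\alpha\neq 0$ so the kernel of $L(s)=s^{2^k}+\alpha s$ is indeed either trivial or of size $\gcd(2^k-1,2^n-1)+1=2^{\gcd(k,n)}$ by the root-counting proposition the paper itself recalls; and $\beta\neq 0$ ensures every solution $s$ is nonzero, so $s\mapsto x_1+1/s$ bijects solutions onto roots other than $x_1$. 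This yields exactly the four possibilities $0$, $1$, $2$, $2^{\gcd(k,n)}+1$. Your route differs from Bluher's original treatment, which works projectively with the family $x^{q+1}+ax+b$ over general characteristic $p$ and develops more machinery to also count roots over extension fields; your reduction to an affine $\mathbb{F}_2$-linearised equation buys a short, elementary proof of precisely the statement quoted here, at the cost of not recovering the finer information (e.g.\ which of the four cases occurs, the subject of \cite{hk1,hk2,km}) that the more structural approaches provide.
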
  

Helleseth and Kholosha \cite{hk1, hk2} proved that when $\gcd(n,k)=1$ the polynomial $P_a(x)$ has 0, 1 or 3 zeros in $\Fn$. Moreover, they provided criteria for $P_a (x)$ to have 0, 1 or 3 zeros depending upon the value of $a$.
In a paper appeared recently \cite{km} the authors calculated all possible roots of $P_a(x)$ in $\Fn$ and gave new criteria on $a$ such that the equation $P_a (x) = 0$ has $0, 1$ or $3$ solutions with the hypothesis that $\gcd(k,n)=1$.

Since $x = 1$ is not a root of $P_a(x)$, solving the equation $P_a(x) = 0$ amounts to finding all $x \in \Fn$ such that 
\[
x = \frac{a}{x^{2^k}+1}.
\]

From the point of view of dynamical systems, this latter problem is equivalent to finding all the fixed points in $\F_{2^n}$ of the map $x \mapsto \frac{a}{{x^{2^k}+1}}$ which can be written also as $x \mapsto \frac{1}{a^{-1} x^{2^k} + a^{-1}}$. Such a map is an instance of the maps $x \mapsto (ax^{2^k}+b)^{-1}$, which can be defined over any finite field $\Fn$ of characteristic two and then extended to the projective line $\Pro(\Fn)$. We study such maps in Section \ref{sec_x2k1}. In Section \ref{sec_ell} we begin our investigation studying the iterations of the map $x \mapsto ax^4+b$ over $\Pro(\Fn)$. Such maps are involved in the definition of the duplication map of certain supersingular elliptic curves defined over finite fields of characteristic two. In Section \ref{sec_x2k} we extend our investigation to more general maps $x \mapsto a x^{2^k} + b$ where $k$ is a non-negative integer.

\section{Preliminaries}\label{sec_pre}
In the paper we use the following notations.
\begin{itemize}
\item $\N$ denotes the set of non-negative integers and $\N^* := \N \backslash \{ 0 \}$.
\item If $a,b \in \N$ with $a \leq b$, then 
\begin{align*}
[a,b] & := \{n \in \N: a \leq n \leq b \};\\
]a, +\infty[ & := \{n \in \N : a < n \}.
\end{align*}
\item If $x, y, n \in \Z$, then $x \approx y \bmod n$ stands for 
\[
x \equiv y \bmod n \quad \text{ or } \quad x \equiv -y \bmod n.
\]
\item If $f : A \to A$ is a map defined on a set $A$ and $m \in \N^*$ then we denote by $f^m$ the $m$-fold composition of $f$ with itself.
\item If $\Fq$ is a finite field with $q$ elements, then $\Pro(\Fq) := \Fq \cup \{ \infty \}$. Moreover we denote by $\cFq$ the algebraic closure of $\Fq$.
\end{itemize}

If $f : \Pro(\Fq) \to \Pro(\Fq)$ is a map defined over $\Pro(\Fq)$, then we denote by $G_q (f)$ the digraph associated with $f$ over $\Pro(\Fq)$. The vertices of $G_q(f)$ are the elements of $\Pro(\Fq)$ and an arrow joins a vertex $\alpha$ to a vertex $\beta$ if $f(\alpha) = \beta$.
Similarly, if $f : \Pro(\cFq) \to \Pro(\cFq)$ then we denote by $G_{\overline{q}} (f)$ the digraph associated with $f$ over $\Pro(\cFq)$.

If $f : \Pro(\Fq) \to \Pro(\Fq)$ and $x_0 \in \Pro(\Fq)$, then we denote the orbit of $x_0$ by
\[
O(f, x_0) := \{f^i (x_0) : i \in \N \}.
\]
In case the map is not ambiguous, we use the shorter notation $O(x_0)$. If $f^j (x_0) = x_0$ for some $j \in \N^*$, then we say that $x_0$ is periodic with respect to $f$ or $f$-periodic (we also write that $x_0$ is periodic if the map $f$ is clear from the context).  If $l \in \N^*$ is the smallest of the indices $j \in \N^*$ such that $f^j (x_0) = x_0$, then we say that $l$ is the period of $x_0$ or the length of the orbit $O(x_0)$. We notice that the length of the orbit is equal to the cardinality of the orbit itself, namely
\[
l = |O(f,x_0)|.
\] We can prove that any $x_0 \in \Pro(\Fq)$ is periodic if $f$ is one-to-one.

\begin{lemma}\label{per_lemma}
Let $f : \Pro(\Fq) \to \Pro(\Fq)$.

\begin{enumerate}[leftmargin=*]
\item $f^i (x_0)$ is periodic for some $i \in \N$ if and only if $f^i (x_0) = f^k (x_0)$ for some $k \in ]i, + \infty[$.
\item If $f : \Pro(\Fq) \to \Pro(\Fq)$ is one-to-one and $x_0 \in \Pro(\Fq)$, then $x_0$ is periodic.
\end{enumerate} 
\end{lemma}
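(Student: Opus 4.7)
\textbf{Proof plan for Lemma \ref{per_lemma}.}

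For part (1), I would handle the two directions separately, both by direct unwinding of the definition. For the forward implication, set $y := f^i(x_0)$ and suppose $y$ is periodic. By definition there exists $j \in \N^*$ with $f^j(y) = y$, which rewrites as $f^{i+j}(x_0) = f^i(x_0)$; taking $k := i+j$ gives $k \in \,]i,+\infty[$, as required. For the converse, suppose $f^i(x_0) = f^k(x_0)$ for some $k > i$. Then with $y := f^i(x_0)$ one checks directly that
\[
f^{k-i}(y) = f^{k-i}\bigl(f^i(x_0)\bigr) = f^k(x_0) = f^i(x_0) = y,
\]
and since $k-i \in \N^*$ this witnesses periodicity of $y$.

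For part (2), the strategy is to combine finiteness of $\Pro(\Fq)$ with injectivity of $f$. Since $\Pro(\Fq)$ has only $q+1$ elements, the infinite sequence $\{f^n(x_0)\}_{n \in \N}$ must take the same value at two distinct indices: there exist $i, k \in \N$ with $i < k$ and $f^i(x_0) = f^k(x_0)$. By part (1), $f^i(x_0)$ is periodic. To conclude that $x_0$ itself is periodic, I would then use injectivity: applying the inverse map $f^{-1}$ (which exists on all of $\Pro(\Fq)$ since an injection from a finite set to itself is a bijection) exactly $i$ times to both sides of $f^i(x_0) = f^k(x_0)$ yields $x_0 = f^{k-i}(x_0)$, with $k-i \in \N^*$, so $x_0$ is periodic.

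The argument is essentially a pigeonhole plus cancellation, so I do not anticipate a serious obstacle. The only subtlety worth being explicit about is the step where one passes from the periodicity of some iterate $f^i(x_0)$ to the periodicity of $x_0$ itself; this step genuinely requires injectivity of $f$ (in general it can fail, which is precisely why part (2) has the one-to-one hypothesis while part (1) does not).
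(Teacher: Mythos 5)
Your proof is correct and follows essentially the same route as the paper: part (1) by direct unwinding of the definition in both directions, and part (2) by pigeonhole on the finite set $\Pro(\Fq)$ followed by cancellation using injectivity. The only cosmetic difference is that the paper performs the cancellation one step at a time via a minimal-index descent, whereas you invert $f$ (as a bijection of a finite set) $i$ times in one stroke; both are valid and rest on the same idea.
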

\begin{proof}
We prove separately the statements.
\begin{enumerate}[leftmargin=*]
\item If $f^i (x_0)$ is periodic then $f^j (f^i(x_0)) = f^i (x_0)$ for some $j \in \N^*$. Hence $f^i (x_0)= f^k (x_0)
$ with $k = i + j$. Vice versa, if $f^i (x_0) = f^k (x_0)$ for some $k \in ]i, + \infty[$ then $f^i (x_0)$ is periodic because $f^j (f^i(x_0)) = f^i (x_0)$ with $j := k-i$.
\item We consider the map $g : \N  \to \Pro(\Fq)$ defined as 
\[
g(n) = f^n (x_0)
\]
for any $n \in \N$.  Since $\Pro(\Fq)$ is a finite set, the image set $g(\N)$ is finite too. Hence there exist some $i, k \in \N$ with $i < k$ such that $f^i (x_0) = f^k (x_0)$. We define the set 
\[
I := \{i \in \N : f^i (x_0) \text{ is periodic} \}.
\]

Let $m := \min(I)$. We have that $f^m (x_0) = f^j (x_0)$ for some $j \in ]m, + \infty[$. Suppose that $m > 0$. Since
\[
f (f^{m-1} (x_0)) = f (f^{j-1} (x_0))
\]
and $f$ is one-to-one, we have that $f^{m-1} (x_0) = f^{j-1} (x_0)$, in contradiction with the minimality of $m$. Hence $m=0$, namely $x_0$ is periodic. \qedhere
\end{enumerate}
\end{proof}
\begin{remark}
Since for a one-to-one map $f : \Pro(\Fq) \to \Pro(\Fq)$ every $x_0 \in \Pro(\Fq)$ is periodic,  the graph $G_q (f)$ is formed by a finite number of cycles, possibly of different lengths.
\end{remark}

The lengths of the orbits of a map $f$ and its $m$-fold composition $f^m$ are related as described in the following lemma.

\begin{lemma}\label{lem_orbits}
Let $f$ be a map defined on $\Pro(\Fq)$ and  $g := f^m$ for some $m \in \N^*$. If $x_0 \in \Pro(\Fq)$ is $f$-periodic and
\[
l := |O(f, x_0)|,
\]  
then $x_0$ is $g$-periodic and
\[
|O(g,x_0)| = \frac{l}{\gcd(l,m)}.
\]
\begin{proof}
If $g^{h} (x_0) = x_0$ for some $h \in \N^*$, then $f^{hm} (x_0) = x_0$. Therefore $l$ divides $hm$. The result follows because $\frac{l}{\gcd(l,m)}$ divides $h$.
\end{proof} 
\end{lemma}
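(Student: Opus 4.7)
The plan is to identify the $g$-periods of $x_0$ directly via a divisibility characterization, and then extract the minimum. Set $d := \gcd(l,m)$ and write $l = d\ell$, $m = dm'$, where $\gcd(\ell, m') = 1$ by construction. The target orbit length is precisely $\ell$.

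First I would verify that $x_0$ is $g$-periodic. Since $g^{\ell}(x_0) = f^{m\ell}(x_0) = f^{dm'\ell}(x_0) = f^{m'l}(x_0)$, and since $f^l(x_0) = x_0$ implies $f^{m'l}(x_0) = x_0$ by iterating the $f$-periodicity $m'$ times, we get $g^{\ell}(x_0) = x_0$. This shows that the set of positive integers $h$ with $g^h(x_0) = x_0$ is nonempty, so $x_0$ is $g$-periodic and its $g$-period is at most $\ell$.

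Next I would show the $g$-period equals $\ell$. The argument already sketched in the paper is the one I would expand: for any $h \in \N^*$ satisfying $g^h(x_0) = x_0$ we have $f^{hm}(x_0) = x_0$, hence $l \mid hm$. Dividing by $d$ gives $\ell \mid hm'$, and since $\gcd(\ell, m') = 1$ this forces $\ell \mid h$. Applying this to the minimal positive $h$ (the $g$-period) together with the upper bound $h \leq \ell$ from the previous step yields $|O(g, x_0)| = \ell = l/\gcd(l,m)$.

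There is no real obstacle here; the only point that requires a moment of care is the elementary number-theoretic step $l \mid hm \Rightarrow \tfrac{l}{\gcd(l,m)} \mid h$, which is exactly the coprimality argument above. I would therefore state the argument cleanly in two halves (existence of a period dividing $\ell$, then minimality via the divisibility observation) rather than relying on the one-line contraction in the excerpt.
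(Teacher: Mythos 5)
Your proposal is correct and follows essentially the same route as the paper's proof: the key step in both is that $g^h(x_0)=x_0$ forces $l \mid hm$ and hence $\tfrac{l}{\gcd(l,m)} \mid h$. You merely make explicit the coprimality argument behind that divisibility and supply the (omitted but easy) verification that $g^{l/\gcd(l,m)}(x_0)=x_0$, so nothing further is needed.
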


We recall the following results (see \cite[Proposition 7.1.2]{ir} and \cite[Theorem 1.68]{lid}).

\begin{proposition}\label{ros_lemma}
Let $\alpha \in \Fq^*$ and $n \in \N^*$. Then $x^n = \alpha$ has solutions if and only if $\alpha^{(q-1)/d} = 1$, where $d = \gcd(n, q-1)$. If there are solutions, then there are exactly $d$ solutions.
\end{proposition}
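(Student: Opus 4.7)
The natural approach is to exploit the cyclicity of $\Fq^*$: since $\Fq^*$ is cyclic of order $q-1$, I would fix a generator $g$ and convert the multiplicative equation $x^n = \alpha$ into an additive congruence in the exponents, where everything becomes elementary number theory modulo $q-1$.

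First I would write $\alpha = g^a$ for some $a \in \Z$, and observe that any solution $x$ of $x^n = \alpha$ is automatically in $\Fq^*$ (since $\alpha \neq 0$), so we may look for $x$ in the form $x = g^y$ without losing solutions. Then $x^n = \alpha$ becomes $g^{ny} = g^a$, i.e.\ the linear congruence
\[
ny \equiv a \pmod{q-1}.
\]
By the standard solvability criterion for linear congruences, this has a solution if and only if $d := \gcd(n,q-1)$ divides $a$, and in that case the set of solutions forms a single coset modulo $(q-1)/d$, giving exactly $d$ incongruent values of $y$ modulo $q-1$ and hence exactly $d$ distinct elements $x \in \Fq^*$.

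To finish, I would translate the divisibility condition $d \mid a$ into the criterion $\alpha^{(q-1)/d} = 1$ stated in the proposition. Since $g$ has order $q-1$ and $\alpha = g^a$, we get
\[
\alpha^{(q-1)/d} = 1 \iff (q-1) \mid \tfrac{a(q-1)}{d} \iff d \mid a,
\]
which closes the equivalence.

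No part of the argument is genuinely delicate: the cyclicity of $\Fq^*$ is the only structural ingredient, and after the reduction to exponents everything is a direct application of the standard linear-congruence criterion. If anything qualifies as an obstacle, it is only the need to argue that distinct residues $y \bmod (q-1)$ give distinct powers $g^y$ in $\Fq^*$, which is immediate from $g$ having exact order $q-1$.
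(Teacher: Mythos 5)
Your proof is correct and is exactly the standard argument: the paper does not prove this proposition itself but recalls it from Ireland--Rosen and Lidl--Niederreiter, and the proof given there is the same reduction via a generator of the cyclic group $\Fq^*$ to the linear congruence $ny \equiv a \pmod{q-1}$, with the criterion $\alpha^{(q-1)/d}=1$ translating the solvability condition $d \mid a$. Nothing further is needed.
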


\begin{theorem}
Let $\F$ be a field. The element $b \in \F$ is a multiple root of $f \in \F [x]$ if and only if it is a root of both $f$ and $f'$.
\end{theorem}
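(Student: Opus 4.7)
The plan is to factor out the maximal power of $(x-b)$ dividing $f$ and then apply the product rule to $f'$. Concretely, since $b$ is a root of $f$, we can write
\[
f(x) = (x-b)^m g(x),
\]
where $m \in \N^*$ is the multiplicity of $b$ as a root and $g \in \F[x]$ satisfies $g(b) \neq 0$. By definition, $b$ is a multiple root precisely when $m \geq 2$.

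Next I would differentiate, using the Leibniz rule (which holds formally in any field, since the derivative of polynomials is defined algebraically):
\[
f'(x) = m (x-b)^{m-1} g(x) + (x-b)^m g'(x).
\]
For the forward direction, assume $m \geq 2$. Then both $(x-b)^{m-1}$ and $(x-b)^m$ vanish at $x=b$, so $f'(b) = 0$; together with $f(b) = 0$, this gives the conclusion. For the converse, I would prove the contrapositive: assume $b$ is not a multiple root, so $m = 1$. Then
\[
f'(x) = g(x) + (x-b) g'(x),
\]
which evaluated at $b$ yields $f'(b) = g(b) \neq 0$, so $b$ is not a common root of $f$ and $f'$.

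The only subtlety worth flagging is that, although we work over an arbitrary field $\F$ of arbitrary characteristic, the argument does not require the integer $m$ to be nonzero in $\F$: in the forward direction the vanishing of $f'(b)$ comes from the factor $(x-b)^{m-1}$ with exponent $m-1 \geq 1$, not from the coefficient $m$. This makes the proof uniform and no case split on the characteristic is needed, so there is no real obstacle — the argument is a short direct verification.
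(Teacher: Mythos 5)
Your proof is correct and complete: the factorization $f(x)=(x-b)^m g(x)$ with $g(b)\neq 0$, the formal Leibniz rule, and the observation that the forward direction needs only the exponent $m-1\geq 1$ (not the invertibility of $m$ in $\F$) together give exactly the standard argument, valid in any characteristic. The paper does not prove this statement at all --- it merely recalls it as Theorem 1.68 of Lidl--Niederreiter --- and your argument is the canonical proof of that cited result, so there is nothing to reconcile.
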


Now we introduce some notations for the maps we investigate in this paper. 

Let $q := 2^n$ for some $n \in \N^*$.  If $a, b \in \Fq$ with $a \neq 0$ and $k \in \N^*$ then we define the maps $\theta_{a,b,k}$ and $\psi_{a,b,k}$ on $\Pro(\Fq)$ as follows:
\begin{align*}
\theta_{a,b,k} (x) & := \begin{cases}
ax^{2^k}+b & \text{if $x \in \F_{q}$,}\\
\infty & \text{if $x = \infty$.}
\end{cases}
\\[2ex]
\psi_{a,b,k} (x) & := \begin{cases}
\infty & \text{if $a x^{2^k} + b = 0$,}\\
0 & \text{if $x = \infty$,}\\
\dfrac{1}{a x^{2^k}+b} & \text{otherwise.}\\
\end{cases}
\end{align*}

From Lemma \ref{ros_lemma} we get the following result.

\begin{lemma}\label{1to1_lemma}
The maps $\theta_{a,b,k}$ and $\psi_{a,b,k}$ are one-to-one.
\end{lemma}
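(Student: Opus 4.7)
The plan is to handle the two maps separately, but both ultimately reduce to the single observation that the $2^k$-th power map is a bijection of $\Fq$ whenever $q$ is a power of $2$. Indeed, applying Proposition~\ref{ros_lemma} with exponent $2^k$ and $q = 2^n$, one gets $d := \gcd(2^k, q-1) = 1$ because $q - 1 = 2^n - 1$ is odd; moreover every $\alpha \in \Fq^*$ trivially satisfies $\alpha^{q-1} = 1$; hence $x^{2^k} = \alpha$ admits a unique solution in $\Fq^*$ for each $\alpha$, and so $x \mapsto x^{2^k}$ is a bijection of $\Fq$ (extended by $0 \mapsto 0$).

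For $\theta_{a,b,k}$ I would then note that, as a self-map of $\Fq$, it is the composition of that bijection with multiplication by $a \neq 0$ and translation by $b$, all of which are bijections of $\Fq$. The extension to $\Pro(\Fq)$ fixes $\infty$ without creating collisions since no finite point is sent to $\infty$, so the whole map is one-to-one.

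For $\psi_{a,b,k}$ my plan is to factor it as $\psi_{a,b,k} = \iota \circ \theta_{a,b,k}$, where $\iota$ denotes the standard inversion $x \mapsto 1/x$ on $\Pro(\Fq)$ with the conventions $\iota(0) := \infty$ and $\iota(\infty) := 0$. A brief case check matching the three branches of the piecewise definition of $\psi_{a,b,k}$ against this composition confirms the factorisation, after which injectivity is immediate: $\iota$ is an involution (hence a bijection) and $\theta_{a,b,k}$ was just handled.

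The main obstacle, such as it is, is simply the need to recall that $q - 1$ is odd whenever $q$ is a power of $2$, which is precisely what makes the exponent $2^k$ coprime to $q - 1$ and thereby the Frobenius-like power map bijective; everything else is routine bookkeeping.
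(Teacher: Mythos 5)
Your proof is correct and follows essentially the same route as the paper: both hinge on Proposition~\ref{ros_lemma} together with $\gcd(2^k,2^n-1)=1$ to get injectivity of the power map, and both reduce the injectivity of $\psi_{a,b,k}$ to that of $\theta_{a,b,k}$. Your packaging of $\psi_{a,b,k}$ as $\iota\circ\theta_{a,b,k}$ with $\iota$ the inversion on $\Pro(\Fq)$ is a slightly tidier way of organising the case check the paper performs explicitly, but the underlying argument is the same.
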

\begin{proof}
First we notice that $\theta_{a,b,k} (x) = \infty$ only if $x = \infty$. If $\theta_{a,b,k} (x_1) = \theta_{a,b,k} (x_2)$ for some $x_1, x_2 \in \Fq$ then $a x_1^{2^k} = a x_2^{2^k}$, namely $x_1^{2^k} = x_2^{2^k}$ because $a \neq 0$. Let $\alpha = x_1^{2^k} = x_2^{2^k}$. If $\alpha = 0$, then $x_1 = x_2 = 0$. If $\alpha \neq 0$, then $x_1 = x_2$ according to Lemma \ref{ros_lemma} since $\gcd(2^k, 2^n-1) = 1$. 

As regards the maps $\psi_{a,b,k}$, we have that $\psi_{a,b,k} (x) = \infty$ only if $x$ is a solution of the equation $a x^{2^k} + b = 0$. Since $\theta_{a,b,k}$ is one-to-one, this latter equation has only one solution. Moreover $\psi_{a,b,k} (x) = 0$ only for $x = \infty$. Finally, if $\frac{1}{a x_1^{2^k}+b} = \frac{1}{a x_2^{2^k}+b}$ then $\theta_{a,b,k} (x_1) = \theta_{a,b,k} (x_2)$. Hence $x_1 = x_2$. 
\end{proof}

\section{Dynamics of the map $x \mapsto ax^4+b$}\label{sec_ell}
Let $q := 2^n$ for some $n \in \N^*$ and $a, b \in \F_{q}$ with $a \neq 0$. In the current section we investigate the map $\theta_{a,b,2}$, which we simply denote by $\theta$. The map $\theta$ is defined as follows over $\Pro (\Fq)$:
\begin{align*}
\theta (x) := \begin{cases}
ax^4+b & \text{if $x \in \F_q$,}\\
\infty & \text{if $x = \infty$.}
\end{cases}
\end{align*}

The image of the map $\theta$ gives the $x$-component of the duplication map defined on a supersingular elliptic curve over a binary field (see for example \cite[Chapter 6]{kob}). More precisely, consider the elliptic curve
\[
E : y^2 + a_1 y = x^3 + a_2 x
\] 
where $a_1, a_2 \in \F_{q}$. If $P := (x_0,y_0) \in E(\Fq)$, then $2 P = (x_1, y_1)$, where
\[
x_1 = \dfrac{x_0^4+a_2^2}{a_1^2}.
\]

Hence
\[
\theta(x_0) = a x_0^4+b =  \dfrac{x_0^4+a_2^2}{a_1^2}, 
\]
with $a_1^2 := a^{-1}$ and $a_2^2 := a_1^{2} b$.

We can describe the iterations of the map $\theta$ on $\Pro(\F_{q})$ relying upon the group of rational points $E(\F_{q})$ of $E$ over $\F_{q}$ or the group of rational points $E(\F_{q^{2}})$ of $E$ over $\F_{q^{2}}$. In fact, for any $x_0 \in \Fq$ there exists some $y_0 \in \Fqt$ such that $P := (x_0, y_0) \in E(\Fqt)$. Hence $2^i P \in E(\Fqt)$ for all $i \in \N^*$. Moreover, if $y_0 \in \Fq$ then $2^i P \in E(\Fq)$ for all $i\in \N^*$.

As recalled in \cite[Remark 2.5]{wi}, which indeed refers to \cite{sil}, if $k \in \N^*$ and $E$ is an elliptic curve defined over $\F_{2^k}$, then there exist two positive integers $n_1, n_2$ such that
\[
E(\F_{2^k}) \cong \Z / n_1 \Z \times \Z / n_2 \Z
\]
with $n_1 \mid \gcd(n_2, 2^k-1)$. Moreover, as recalled in \cite[Section 4]{wi}, the cardinality $|E(\F_{2^k})|$ is odd.
Since the map $\theta$ is related to the duplication map over $E$, we are interested in studying the action of the pair of classes $([2], [2])$ on the elements belonging to $\Z / n_1 \Z \times \Z / n_2 \Z$.

\subsection{Length of the cycles}\label{sec_length}
Let $E(\Fqt)$ be isomorphic to
\[
A := \Z / n_1 \Z \times \Z / n_2 \Z
\]
for some $n_1, n_2 \in \N^*$. 
If $x_0 \in \Fq$, then $x_0$ is the $x$-coordinate of a point $P := ([a_1],[a_2])$ in $A$. Moreover $P$ and $-P$ are the only points whose $x$-coordinate is $x_0$. 

For $i \in \{1, 2 \}$ we define 
\begin{align*}
d_i & := \gcd(a_i, n_i);\\
m_i & := \min \{k \in \N^*: 2^{k} a_i \approx a_i \bmod{n_i} \}.
\end{align*}

Moreover we define
\begin{align*}
m & := \min \{k \in \N^*: 2^{k} P \approx P \};\\
l & := \lcm(m_1, m_2).
\end{align*}

In the next theorem we show what the length of the orbit $O(x_0)$ is. 

\begin{theorem}
The following hold.
\begin{enumerate}[leftmargin=*]
\item If $i \in \{1 ,2 \}$, then $m_i$ is the smallest of the positive integers $k$ such that $2^k \approx 1 \bmod \frac{n_i}{d_i}$.
\item The length of $O(x_0)$ in $G_q (\theta)$ is $m$. Moreover we have that
\[
m = \begin{cases}
l & \text{if $2^l P \approx P$,}\\
2 l & \text{otherwise}.
\end{cases}
\] 
\end{enumerate}

\end{theorem}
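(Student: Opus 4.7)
The plan is to dispatch part (1) with a short number-theoretic manipulation, then prove part (2) by relating $\theta$ to duplication on $E$ and tracking signs carefully inside the group $A$.

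For part (1), I would unwind the congruence $2^k a_i \equiv a_i \bmod n_i$ as $n_i \mid (2^k-1)\,a_i$. Since $\gcd(a_i/d_i,\, n_i/d_i) = 1$, cancelling $d_i$ shows this is equivalent to $(n_i/d_i) \mid (2^k - 1)$, i.e.\ $2^k \equiv 1 \bmod n_i/d_i$. The analogous computation with a sign flip yields $2^k a_i \equiv -a_i \bmod n_i$ iff $2^k \equiv -1 \bmod n_i/d_i$. Taking the disjunction and minimizing over $k \in \N^*$ gives the claimed characterization of $m_i$.

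For part (2), the first step is to identify the orbit length of $x_0$ with $m$. By Lemma \ref{1to1_lemma} and Lemma \ref{per_lemma}(2), $x_0$ is $\theta$-periodic, so its orbit length is the smallest $k \in \N^*$ with $\theta^k(x_0) = x_0$. Since $\theta$ computes the $x$-component of duplication on $E$, we have that $\theta^k(x_0)$ is the $x$-coordinate of $2^k P$. The only points of $E$ sharing the $x$-coordinate of $P$ are $\pm P$, because fixing $x$ in $Y^2 + a_1 Y = x^3 + a_2 x$ yields a quadratic whose two roots $y_0$ and $y_0 + a_1$ are precisely the $y$-coordinates of $P$ and $-P$ in characteristic two. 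Therefore $\theta^k(x_0) = x_0$ iff $2^k P \approx P$, and the orbit length equals $m$.

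For the formula for $m$, choose signs $\epsilon_i \in \{\pm 1\}$ with $2^{m_i} a_i \equiv \epsilon_i a_i \bmod n_i$ (either choice is fine in the degenerate case $2 a_i \equiv 0$). Iterating gives $2^{j m_i} a_i \equiv \epsilon_i^{\,j} a_i \bmod n_i$, so setting $\delta_i := \epsilon_i^{\,l/m_i}$ one obtains $2^l P = (\delta_1 a_1, \delta_2 a_2)$ inside $A$. A Euclidean-division argument using the minimality of $m_i$ shows that $2^k a_i \approx a_i \bmod n_i$ forces $m_i \mid k$, so $2^k P \approx P$ implies $l \mid k$; in particular $l \mid m$. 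If $\delta_1 = \delta_2$ then $2^l P \approx P$, yielding $m \le l$ and hence $m = l$. Otherwise $2^l P \not\approx P$, so $l < m$, while $2^{2l} a_i \equiv \delta_i^2 a_i = a_i$ for both $i$ gives $m \mid 2l$; combined with $l \mid m$ and $l < m$ this forces $m = 2l$.

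The main obstacle will be the sign bookkeeping in the last paragraph: the relation $\approx$ on $A$ requires the \emph{same} sign on both coordinates simultaneously, whereas the individual $m_i$ are defined with independent signs. The dichotomy $m = l$ versus $m = 2l$ is precisely the reflection of whether the forced signs $\delta_1$ and $\delta_2$ at time $l$ happen to agree.
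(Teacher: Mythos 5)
Your proposal is correct and follows essentially the same route as the paper: part (1) via cancelling $d_i$ in $n_i \mid (2^k \mp 1)a_i$, and part (2) via the equivalence $\theta^k(x_0)=x_0 \Leftrightarrow 2^kP \approx P$, the divisibility $l \mid m$, and the sign dichotomy at time $l$ giving $m=l$ or $m=2l$. Your write-up is in fact more careful than the paper's (which asserts $l\mid m$ without the Euclidean-division justification and is terse about the mismatched-sign case), but there is no substantive difference in approach.
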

\begin{proof}
We prove separately the statements.
\begin{enumerate}[leftmargin=*]
\item We have that $2^k a_i \approx  a_i \bmod{n_i}$ for some $k \in \N^*$ if and only if $n_i$ divides $(2^k + 1) a_i$ or $(2^k - 1) a_i$. According to a well-known arithmetical lemma, this latter is equivalent to the fact that $\frac{n_i}{d_i}$ divides $(2^k + 1)$ or $(2^k - 1)$. Hence the claim for $m_i$ is proved. 

\item We notice that $\theta^k (x_0) = x_0$ for some positive integer $k$ if and only if $2^k P \approx P$, namely if and only if 
\[
([2^k a_1], [2^k a_2]) = ([a_1],[a_2]) \ \text{ or } \ ([2^k a_1], [2^k a_2]) = ([-a_1], [-a_2]).
\]

Therefore $\theta^m (x_0) = x_0$. Since $m$ must be a common multiple of $m_1$ and $m_2$, we have that $l := \lcm(m_1, m_2)$ divides $m$. Moreover $2^l P \approx P$ only if $[2^l a_1] = [a_1]$ and $[2^l a_2] = [-a_2]$ or viceversa. In both cases we have that $2^{2l} P = P$. \qedhere
\end{enumerate}
\end{proof} 

\begin{remark}
We have proved that the length of $O(x_0)$ with $x_0 \in \Pro (\Fq)$ depends on $\gcd(a_1,n_1)$ and $\gcd(a_2,n_2)$, where $([a_1], [a_2])$ is a point in $E(\Fqt)$ whose $x$-coordinate is $x_0$. Therefore, the possible lengths of the cycles in $G_{q} (\theta)$ can be computed relying upon the pairs $(d_1, d_2)$, where $d_1$ and $d_2$ are (positive) divisors of $n_1$ and $n_2$ respectively and
\[
E (\Fqt) \cong \Z / n_1 \Z \times \Z / n_2 \Z.
\]
\end{remark}

If we focus on rational points of $E(\Fq)$, we can state a more precise result. With a little abuse of notation we suppose that $E(\Fq) \cong A$ where 
\[
A := \Z / n_1 \Z \times \Z /  n_2 \Z 
\] 
for some positive integers $n_1$ and $n_2$.

\begin{lemma}\label{lem_ratq}
The following hold.
\begin{enumerate}[leftmargin=*]
\item If $([a_1],[a_2]) \in A$, then $\gcd(a_i, n_i) = \gcd(2 a_i,n_i)$ for $i \in \{1, 2 \}$.
\item Let $d_1$ and $d_2$ be two positive integers such that
\[
d_1 \mid n_1 \quad \text{ and } \quad d_2 \mid n_2.
\]
If $m_1 := \frac{n_1}{d_1}$ and $m_2 := \frac{n_2}{d_2}$, then there are $\phi \left( m_1 \right) \cdot \phi \left( m_2 \right)$ points $([a_1], [a_2]) \in A$ such that 
\[
\gcd(a_1,n_1) = d_1 \quad \text{ and } \quad \gcd(a_2, n_2) = d_2.
\]
\end{enumerate}
\end{lemma}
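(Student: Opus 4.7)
The plan is to base part (1) on the fact, recalled at the beginning of Section~\ref{sec_ell}, that $|E(\F_{q})|$ is odd. Since $|A| = n_1 n_2$, both $n_1$ and $n_2$ must be odd, so $\gcd(2,n_i)=1$ for $i\in\{1,2\}$. From the elementary identity
\[
\gcd(2 a_i, n_i) \;=\; \gcd(2, n_i) \cdot \gcd(a_i, n_i) \;=\; \gcd(a_i, n_i),
\]
the first claim follows immediately. This is really the only place where the arithmetic of $E(\F_q)$ (as opposed to an abstract abelian group) enters.

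For part (2), the plan is a direct counting argument. Fix the canonical set of representatives $a_i \in \{0, 1, \dots, n_i - 1\}$ for $\Z / n_i \Z$. If $\gcd(a_i, n_i) = d_i$, then in particular $d_i \mid a_i$, so we may write $a_i = d_i b_i$ with $b_i \in \{0, 1, \dots, m_i - 1\}$, where $m_i = n_i / d_i$. Conversely, using
\[
\gcd(d_i b_i,\, d_i m_i) \;=\; d_i \cdot \gcd(b_i, m_i),
\]
the equality $\gcd(a_i, n_i) = d_i$ is equivalent to $\gcd(b_i, m_i) = 1$. Hence the number of admissible $a_i$ is precisely the number of integers in $[0, m_i-1]$ coprime to $m_i$, which is $\phi(m_i)$.

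Finally, since the conditions on the two coordinates are independent, the total number of points $([a_1],[a_2]) \in A$ satisfying $\gcd(a_1, n_1) = d_1$ and $\gcd(a_2, n_2) = d_2$ is $\phi(m_1)\cdot \phi(m_2)$, proving the second claim. The only conceptual point to be careful about is the parity observation in (1); part (2) is a routine application of the standard fact that $\#\{b \in [0,m-1] : \gcd(b,m)=1\} = \phi(m)$, and I do not foresee any real obstacle beyond verifying that the parametrization $a_i = d_i b_i$ is a bijection onto the multiples of $d_i$ in $[0, n_i-1]$.
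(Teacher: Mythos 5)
Your proof is correct and follows essentially the same route as the paper: part (1) rests on $n_i$ being odd (which the paper likewise deduces from $|E(\Fq)|$ being odd), and part (2) is just the paper's ring-isomorphism argument $d_i \Z / n_i \Z \cong \Z / m_i \Z$ written out as the explicit parametrization $a_i = d_i b_i$ with $\gcd(d_i b_i, d_i m_i) = d_i \gcd(b_i, m_i)$. One small caution: the ``elementary identity'' $\gcd(2a_i,n_i)=\gcd(2,n_i)\cdot\gcd(a_i,n_i)$ is not a valid gcd identity in general (e.g.\ $\gcd(2\cdot 2,6)=2$ while $\gcd(2,6)\cdot\gcd(2,6)=4$); it holds in your setting only because $\gcd(2,n_i)=1$, so it is cleaner to invoke directly the standard fact that $\gcd(ka,n)=\gcd(a,n)$ whenever $\gcd(k,n)=1$.
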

\begin{proof}
Let $i \in \{1, 2 \}$. We prove separately the statements.
\begin{enumerate}[leftmargin=*]
\item The assertion follows from the fact that an integer $m$ divides $a_i$ and $n_i$ if and only if $m$ divides $2 a_i$ and $n_i$ because $n_i$ is odd.
\item We consider the ring isomorphism
\begin{align*}
\psi_i: d_i \Z / n_i \Z & \to  \Z / m_i \Z \\
d_i k + n_i \Z & \mapsto k + m_i \Z.
\end{align*}

Let $[a_i] \in \Z / n_i \Z$. Then $\gcd(a_i, n_i) = d_i$ if and only if $a_ i = d_i k$ for some integer $k$ coprime to $n_i$. This is equivalent to saying that $[k]$ is a unit in $\Z / m_i \Z$. Hence there are $\phi(m_i)$ classes $[a_i]$ in $\Z / n_i \Z$ such that $\gcd(a_i, n_i) = d_i$. \qedhere 
\end{enumerate}
\end{proof}

\begin{example}
Let $\Fq$ be the field with $q := 2^5$ elements. We denote by $g$ a generator of $\F^*_{q}$. We construct the graph $G_{q} (\theta)$, where $\theta$ is defined on $\Pro(\F_{q})$ as follows: 
\[
\theta (x) := \begin{cases}
g x^4+g^3 & \text{if $x \in \F_{q}$,}\\
\infty & \text{if $x = \infty$.}
\end{cases}
\]

The map $\theta$ is related to the duplication map defined over the curve 
\[
E : y^2 + g^{15} y = x^3 + gx.
\] 

The vertices of $G_q (\theta)$ are labelled by  the exponents of $g^i$ for $i \in [0,30]$. The remaining two vertices are labelled by $\infty$ and `0' (the zero of $\Fq$).
\begin{center}
    \begin{picture}(95, 15)(0,-5)
    \gasset{Nw=5,Nh=5,Nmr=2.5,curvedepth=0}
    \thinlines
    \footnotesize
    \node(N1)(0,0){$0$}
    \node(N2)(10,0){$6$}
    \node(N3)(20,0){$10$}
    \node(N4)(30,0){$25$}
    \node(N5)(40,0){$5$}
    \node(N6)(50,0){$4$}
    \node(N7)(60,0){$16$}
    \node(N8)(70,0){`0'}
    \node(N9)(80,0){$3$}
    \node(N10)(90,0){$7$}

    \drawedge(N1,N2){}
    \drawedge(N2,N3){}
    \drawedge(N3,N4){}
    \drawedge(N4,N5){}
    \drawedge(N5,N6){}
    \drawedge(N6,N7){}
    \drawedge(N7,N8){}
    \drawedge(N8,N9){}
    \drawedge(N9,N10){}
    
    \gasset{curvedepth=8}
    \drawedge(N10,N1){}
\end{picture}
\end{center}

\begin{center}
\begin{picture}(95, 15)(0,-5)
    \gasset{Nw=5,Nh=5,Nmr=2.5,curvedepth=0}
    \thinlines
    \footnotesize
    \node(N1)(0,0){$1$}
    \node(N2)(10,0){$8$}
    \node(N3)(20,0){$20$}
    \node(N4)(30,0){$12$}
    \node(N5)(40,0){$27$}
    \node(N6)(50,0){$17$}
    \node(N7)(60,0){$13$}
    \node(N8)(70,0){$14$}
    \node(N9)(80,0){$15$}
    \node(N10)(90,0){$9$}

    \drawedge(N1,N2){}
    \drawedge(N2,N3){}
    \drawedge(N3,N4){}
    \drawedge(N4,N5){}
    \drawedge(N5,N6){}
    \drawedge(N6,N7){}
    \drawedge(N7,N8){}
    \drawedge(N8,N9){}
    \drawedge(N9,N10){}
    
    \gasset{curvedepth=8}
    \drawedge(N10,N1){}
\end{picture}
\end{center}

\begin{center}
\begin{picture}(95, 20)(0,-10)
    \gasset{Nw=5,Nh=5,Nmr=2.5,curvedepth=0}
    \thinlines
    \footnotesize
    \node(N1)(0,0){$2$}
    \node(N2)(10,0){$30$}
    \node(N3)(20,0){$24$}
    \node(N4)(30,0){$21$}
    \node(N5)(40,0){$11$}
    \node(N6)(50,0){$22$}
    \node(N7)(60,0){$18$}
    \node(N8)(70,0){$23$}
    \node(N9)(80,0){$29$}
    \node(N10)(90,0){$28$}

    \drawedge(N1,N2){}
    \drawedge(N2,N3){}
    \drawedge(N3,N4){}
    \drawedge(N4,N5){}
    \drawedge(N5,N6){}
    \drawedge(N6,N7){}
    \drawedge(N7,N8){}
    \drawedge(N8,N9){}
    \drawedge(N9,N10){}
    
    \gasset{curvedepth=8}
    \drawedge(N10,N1){}
\end{picture}
\end{center}

\begin{center}
\begin{picture}(55, 15)(-10,-10)
    \gasset{Nw=5,Nh=5,Nmr=2.5,curvedepth=0}
    \thinlines
    \footnotesize
    \node(N1)(0,0){$19$}
    \node(N2)(10,0){$26$}
    \node(N3)(30,0){$\infty$}

    \drawedge(N1,N2){}

    \gasset{curvedepth=4}
    \drawedge(N2,N1){}
        
    \drawloop[loopangle=-90](N3){}
\end{picture}
\end{center}
\end{example}

Using Sage \cite{sage} we get that $E (\Fq) \cong \Z / 41 \Z$. Adopting the notations of the current section and Lemma \ref{lem_ratq} we have that $n_1= 1$ and $n_2 = 41$. 
The set of (positive) divisors of $n_2$ is
\[
D = \{1, 41 \}.
\] 

Let $d_2 := 1$ and $m_2 := 41$. Then $10$ is the smallest $k \in \N^*$ such that 
\[
2^k \approx 1 \bmod 41.
\]

Therefore there are $\phi(m_2) = 40$ points in $E(\Fq)$ whose $x$-coordinates belong to cycles of length $10$. Since exactly two points have the same $x$-coordinate, these $40$ points form $2$ cycles of length $10$. We notice in passing that the two cycles are the orbits $O(g^0)$ and $O(g^2)$. 

Let $d_2 := 41$ and $m_2 := 1$. This case corresponds to the element $\infty$, which forms a cycle of length $1$.

Now we consider the group $E(\Fqt)$. We have that $E(\Fqt) \cong \Z / 1025 \Z$. Since $1025 = 5^2 \cdot 41$, the set of divisors of $1025$ is
\[
D = \{ 1, 5, 25, 41, 205, 1025 \}.
\] 

We notice in particular that for $d_2 := 205$ we have $m_2 := \frac{1025}{205} = 5$. Then $2$ is the smallest $k \in \N^*$ such that 
\[
2^k \approx 1 \bmod 5.
\]

Hence there are two rational points in $E(\Fqt)$ whose $x$-coordinates form the cycle of length $2$ in $G_q (\theta)$. 
We get the remaining cycle of length $10$ setting $d_2 := 25$ and correspondingly $m_2 := \frac{1025}{25} = 41$.

\section{Dynamics of the maps $a x^{2^k} +b$}\label{sec_x2k}
Let $n \in \N^*$. In this section we study the maps $\theta_{a,b,k}$, where $a, b \in \F_{2^n}$ with $a \neq 0$ and $k \in \N$, defined on $\Pro(\F_{2^n})$ as in  Section \ref{sec_intro}.
We adopt the shorter notation $\theta_{a,b}$ for the map $\theta_{a,b,2}$. In Lemmas \ref{rel_1} and \ref{rel_2} we show how the dynamics of the map $\theta_{a,b,k}$ can be related to the dynamics of the map $\theta_{c,d}$ for suitable coefficients $c$ and $d$ in $\cFn$.
For any $q \in \N^*$ and any $t \in \N$ we define 
\[
s^q_t := \begin{cases}
0 & \text{if $t =0$,}\\
\sum_{i=0}^{t-1} q^i & \text{if $t >0$.}
\end{cases}
\]

\begin{lemma}\label{rel_1}
Let $q := 2^k$ for some $k \in \N$. Let $m \in \N^*$ and $a, b \in \Fn$ with $a \neq 0$. If $\psi(x) := a x^q +b$ for any $x \in \F_{2^n}$, then
\[
\psi^m (x) = a^{s_{m}} x^{q^m} + b_m,
\]
where $s_t := s_t^q$ for any $t \in [0, m-1]$ and
\[
b_m := \sum_{t=0}^{m-1} a^{s_t} b^{q^t}.
\]
\end{lemma}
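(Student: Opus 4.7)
The plan is a straightforward induction on $m \in \N^*$. For the base case $m=1$, observe that $s_1 = q^0 = 1$ and, since $s_0 = 0$, $b_1 = a^{s_0} b^{q^0} = b$, so the claimed formula reads $\psi(x) = a x^q + b$, which is just the definition of $\psi$.

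For the inductive step, assume the formula holds for some $m \geq 1$. The key observation is that $q = 2^k$ is a power of the characteristic, so the map $y \mapsto y^q$ is additive on $\Fn$. Composing $\psi$ with $\psi^m$ and exploiting this additivity, we obtain
\[
\psi^{m+1}(x) = a \left( a^{s_m} x^{q^m} + b_m \right)^q + b = a^{1 + q\, s_m}\, x^{q^{m+1}} + a\, b_m^q + b.
\]
It remains to identify $1 + q\, s_m$ with $s_{m+1}$ and $a\, b_m^q + b$ with $b_{m+1}$.

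The first identification is the geometric-series identity $1 + q \sum_{i=0}^{m-1} q^i = \sum_{i=0}^{m} q^i = s_{m+1}$. For the second, applying additivity of $y \mapsto y^q$ again inside $a\, b_m^q$ yields
\[
a\, b_m^q = \sum_{t=0}^{m-1} a^{1 + q\, s_t}\, b^{q^{t+1}} = \sum_{t=0}^{m-1} a^{s_{t+1}}\, b^{q^{t+1}} = \sum_{u=1}^{m} a^{s_u}\, b^{q^u},
\]
and adding $b = a^{s_0} b^{q^0}$ recovers the full sum defining $b_{m+1}$.

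I do not foresee any serious obstacle here. The whole argument rests on the Frobenius identity $(u+v)^q = u^q + v^q$, valid because we are in characteristic two with $q$ a power of $2$, together with the uniform recurrence $s_{m+1} = 1 + q\, s_m$, which conveniently handles both the exponent of $a$ in the leading term and the index shift required for $b_{m+1}$.
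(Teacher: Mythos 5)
Your proposal is correct and follows essentially the same route as the paper: induction on $m$, the Frobenius additivity of $y \mapsto y^q$, the recurrence $s_{m+1} = 1 + q\,s_m$, and the index shift $a^{1+q s_t} = a^{s_{t+1}}$ to assemble $b_{m+1}$. The only cosmetic difference is that you spell out the base case $b_1 = a^{s_0} b^{q^0} = b$ explicitly, which the paper leaves implicit.
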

\begin{proof}
We prove the statement by induction on $m \in \N^*$.

If $m = 1$, then
\[
\psi^{1} (x)  = \psi(x) = a x^q + b = a^{s_1} x^{q^1} + b_1 .
\]

Now we prove the inductive step. If $m \geq 1$, then 
\begin{align*}
\psi^{m+1} (x) & = \psi (\psi^m (x)) = a (a^{s_{m}} x^{q^m} + b_m)^q + b \\
& = a^{1 + q s_m} x^{q^{m+1}} + a b_m^q + b. 
\end{align*}

We notice that
\begin{align*}
1 + q s_m & = 1 + \sum_{t=0}^{m-1} q^{t+1} = 1 + \sum_{t=1}^{m} q^t = \sum_{t=0}^{m} q^t = s_{m+1};\\
a b_m^q + b & = a \sum_{t=0}^{m-1} a^{q s_t} b^{q^{t+1}} + b = \sum_{t=0}^{m-1} a^{q s_t+1} b^{q^{t+1}} + b \\
& = \sum_{t=0}^{m-1} a^{q s_t+1} b^{q^{t+1}}  + b = \sum_{t=0}^{m-1} a^{s_{t+1}} b^{q^{t+1}} + b \\
& = \sum_{t=1}^m a^{s_{t}} b^{q^{t}} + b = b_{m+1}.
\end{align*}

Therefore $ \psi^{m+1} (x) = a^{s_{m+1}} x^{q^{m+1}} + b_{m+1}$. 
\end{proof}

\begin{lemma}\label{rel_2}
Let $k$ be an integer  with $k \geq 2$ and $a, b \in \Fn$ with $a \neq 0$. Then there exist some $c, d \in \cFn$ such that for any $x \in \Pro(\Fn)$ we have that
\begin{align*}
\theta^{\frac{k}{2}}_{c,d} (x) & = \theta_{a,b,k} (x) \quad  \text{if $k$ is even,}\\
\theta^{k}_{c,d} (x) & = \theta_{a,b,k}^2 (x) \quad  \text{if $k$ is odd.}
\end{align*}
\end{lemma}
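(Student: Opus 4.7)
The plan is to apply Lemma \ref{rel_1} to expand $\theta_{c,d}^{j}$ as an explicit polynomial in $x$ with coefficients depending on $c$ and $d$, and then to solve in the algebraic closure $\cFn$ for the values of $c$ and $d$ that make these coefficients match those of $\theta_{a,b,k}$ in the even case, or of $\theta_{a,b,k}^{2}$ in the odd case. Since $\theta_{a,b,k}$, its square, and $\theta_{c,d}^{j}$ all send $\infty$ to $\infty$, it is enough to verify the identities on $\Fn$, and for this it suffices to match the two sides as formal polynomials in $x$.

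Suppose first that $k$ is even, and set $m := k/2 \geq 1$. Applying Lemma \ref{rel_1} with $q = 4$ yields
\[
\theta_{c,d}^{m}(x) = c^{s^{4}_{m}}\, x^{2^{k}} + \sum_{t=0}^{m-1} c^{s^{4}_{t}}\, d^{4^{t}},
\]
so the identity $\theta_{c,d}^{m} = \theta_{a,b,k}$ will follow once I choose $c, d \in \cFn$ satisfying $c^{s^{4}_{m}} = a$ and $\sum_{t=0}^{m-1} c^{s^{4}_{t}}\, d^{4^{t}} = b$. Since $s^{4}_{m} \geq 1$ and $a \neq 0$, the first equation has a nonzero solution $c \in \cFn$. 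Fixing such a $c$, the second equation is polynomial in $d$ with leading term $c^{s^{4}_{m-1}} d^{4^{m-1}}$ (reducing to $d = b$ when $m = 1$), and therefore admits a root $d \in \cFn$.

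For $k$ odd (so $k \geq 3$), I would apply Lemma \ref{rel_1} twice: once with $q = 2^k$ and exponent $2$ to obtain
\[
\theta_{a,b,k}^{2}(x) = a^{\,1+2^{k}}\, x^{2^{2k}} + \bigl(b + a\, b^{2^{k}}\bigr),
\]
and once with $q = 4$ and exponent $k$ to obtain
\[
\theta_{c,d}^{k}(x) = c^{s^{4}_{k}}\, x^{2^{2k}} + \sum_{t=0}^{k-1} c^{s^{4}_{t}}\, d^{4^{t}}.
\]
Matching coefficients then reduces the problem to the system $c^{s^{4}_{k}} = a^{1+2^{k}}$ together with a polynomial equation of positive degree $4^{k-1}$ in $d$, both of which are solvable in the algebraically closed field $\cFn$.

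The only point requiring care is the need for the polynomial equation in $d$ to have positive degree, which in turn reduces to ensuring $c \neq 0$; this is immediate from $a \neq 0$ and from the defining equation for $c$. No deeper obstacle intervenes because $c$ and $d$ are chosen in the algebraic closure rather than inside $\Fn$: working inside $\Fn$ itself would raise genuine solvability questions, but for the statement as given, algebraic closedness settles everything.
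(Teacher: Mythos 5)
Your proposal is correct and follows essentially the same route as the paper: expand $\theta_{c,d}^{j}$ via Lemma \ref{rel_1} with $q=4$ (taking $j=k/2$ in the even case and $j=k$ in the odd case), then solve the resulting coefficient equations for $c$ and $d$ in $\cFn$. Your added remarks on why these equations are solvable (nonvanishing of $c$ and positive degree in $d$) make explicit a point the paper leaves implicit, but the argument is the same.
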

\begin{proof}
If $c, d \in \cFn$ and $j \in \N^*$, then from Lemma \ref{rel_1} we get that
\[
\theta^j_{c,d} (x) = c^{s_j} x^{4^j} + d_j
\]
with $s_j := s_j^4$ and $d_j := \sum_{i=0}^{j-1} c^{s_i} d^{4^i}$.

For any $k \in \N^*$ we have that
\begin{align*}
\theta_{a,b,k} (x) & = a x^{2^k} + b, \\
\theta_{a,b,k}^2 (x) & = a^{2^k+1} x^{2^{2k}} + a b^{2^k} + b.
\end{align*}

If $k$ is even, then we set $j := \frac{k}{2}$ and we find $c, d \in \cFn$ solving the equations
\[
c^{s_j} = a \quad \text{ and } \quad \sum_{i=0}^{j-1} c^{s_i} d^{4^i} = b. 
\] 

If $k$ is odd, then we set $j := k$ and we find $c, d \in \cFn$ solving the equations
\[
c^{s_j} = a^{2^k+1} \quad \text{ and } \quad \sum_{i=0}^{j-1} c^{s_i} d^{4^i} = a b^{2^k} + b. \qedhere
\] 
\end{proof}

\begin{remark}
One of the hypotheses of Lemma \ref{rel_2} is that $k \geq 2$. Indeed, if we consider $n \geq 2$, we have that 
\begin{align*}
\theta_{a,b,0} (x) & = \theta_{a,b,n} (x),\\
\theta_{a,b,1} (x) & = \theta_{a,b,n+1} (x),
\end{align*}
because $x^{2^n} = x$ and $x^{2^{n+1}} = (x^{2^n})^2 = x ^2$.
\end{remark}

Under the same hypotheses (and using the same notations) of Lemma \ref{rel_2} we define the map $\psi$ on $\Pro(\Fn)$ as follows:
\[
\psi(x) :=
\begin{cases}
\theta_{a,b,k} (x) &  \text{if $k$ is even,}\\
\theta_{a,b,k}^2 (x) &  \text{if $k$ is odd.}
\end{cases}
\]

Moreover we define the integer
\[
m := \begin{cases}
\frac{k}{2} & \text{if $k$ is even,}\\
k &  \text{if $k$ is odd.}
\end{cases}
\]

The following holds.
\begin{lemma}\label{lem_123}
Let $x_0 \in \Pro(\Fn)$ and
\begin{align*}
l_1 & := |O(\theta_{c,d}, x_0)|, \\
l_2 & := |O(\theta_{a,b,k}, x_0)|,\\
l_3 & := |O(\psi, x_0)|.
\end{align*}

Then $l_3 = \frac{l_{1}}{\gcd(l_{1},m)}$. Moreover $l_{2} = l_{3}$ if $k$ is even, while $l_{2} \in \{l_{3}, 2 l_{3} \}$ if $k$ is odd.
\end{lemma}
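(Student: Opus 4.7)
The plan is to recognize the statement as two successive applications of Lemma \ref{lem_orbits}, once linking $l_{1}$ with $l_{3}$ via $\theta_{c,d}$, and once linking $l_{3}$ with $l_{2}$ via $\theta_{a,b,k}$. The key identity that enables this is contained in Lemma \ref{rel_2}: both in the even and in the odd case it asserts exactly that
\[
\theta_{c,d}^{\,m}(x) = \psi(x) \quad \text{for all } x \in \Pro(\Fn),
\]
since $(m, \psi) = (k/2,\, \theta_{a,b,k})$ when $k$ is even and $(m, \psi) = (k,\, \theta_{a,b,k}^{2})$ when $k$ is odd.

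Before invoking Lemma \ref{lem_orbits} I would check that the three orbit lengths are well defined. Lemma \ref{1to1_lemma} gives that $\theta_{a,b,k}$ is one-to-one on $\Pro(\Fn)$, so by Lemma \ref{per_lemma}(2) the point $x_{0}$ is $\theta_{a,b,k}$-periodic; hence $l_{2}$ exists, and since $\psi$ is either $\theta_{a,b,k}$ or $\theta_{a,b,k}^{2}$ the number $l_{3}$ exists as well. The only delicate case is $l_{1}$, because $c$ and $d$ lie a priori in $\cFn$ and $\theta_{c,d}$ need not stabilize $\Pro(\Fn)$. However, from the identity $\theta_{c,d}^{jm}(x_{0}) = \psi^{j}(x_{0})$ for every $j \in \N$, applied at $j = l_{3}$, we obtain $\theta_{c,d}^{m l_{3}}(x_{0}) = x_{0}$, so $x_{0}$ is $\theta_{c,d}$-periodic (with orbit lying in a finite extension of $\Fn$) and $l_{1}$ is defined.

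Once this is in place, both assertions are immediate. Applying Lemma \ref{lem_orbits} with $f := \theta_{c,d}$ and $g := f^{m} = \psi$ yields
\[
l_{3} = \frac{l_{1}}{\gcd(l_{1},m)},
\]
which is the first claim. For the relation between $l_{2}$ and $l_{3}$: if $k$ is even then $\psi = \theta_{a,b,k}$ by definition, so $l_{2} = l_{3}$; if $k$ is odd then $\psi = \theta_{a,b,k}^{2}$, and a second application of Lemma \ref{lem_orbits}, now with $f := \theta_{a,b,k}$ and $g := f^{2} = \psi$, gives
\[
l_{3} = \frac{l_{2}}{\gcd(l_{2},2)},
\]
whence $l_{2} = l_{3}$ if $l_{2}$ is odd and $l_{2} = 2 l_{3}$ if $l_{2}$ is even, proving $l_{2} \in \{l_{3}, 2 l_{3}\}$. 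The only step that required thought was the well-definedness of $l_{1}$; the rest is a direct substitution in the already-proved Lemma \ref{lem_orbits}.
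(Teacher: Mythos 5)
Your proof is correct and follows essentially the same route as the paper: both apply Lemma \ref{lem_orbits} once with $f := \theta_{c,d}$ and $g := \theta_{c,d}^{m} = \psi$ to obtain $l_{3} = l_{1}/\gcd(l_{1},m)$, and (in the odd case) once more with $f := \theta_{a,b,k}$ and $g := \theta_{a,b,k}^{2} = \psi$ to obtain $l_{2} = l_{3}\cdot\gcd(l_{2},2)$, hence $l_{2} \in \{l_{3}, 2l_{3}\}$. Your extra verification that $l_{1}$ is well defined (since $c,d$ may lie in a proper extension of $\Fn$, so one must check $x_{0}$ is $\theta_{c,d}$-periodic) is a point the paper passes over silently; it is a welcome refinement but does not change the argument.
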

\begin{proof}
From Lemma \ref{lem_orbits} we get immediately that $l_{3} = \frac{l_1}{\gcd(l_{1},m)}$ because $\psi = \theta_{c,d}^m$. If $k$ is even, then $l_{2} = l_{3}$ because $\theta_{a,b,k} = \psi$. If $k$ is odd, then from Lemma \ref{lem_orbits} we get that
\[
l_{2} = l_{3} \cdot \gcd(l_{2}, 2).
\]
Hence $l_{2} = l_{3}$ if $l_{2}$ is odd, while $l_{2} = 2 l_{3}$ if $l_{2}$ is even.
\end{proof}

\begin{example}
Let $g$ be a generator of $\F_{q}^*$, where $q:=2^5$.
In this example we consider the map $\sigma := \theta_{g^7, g^3, 3}$, which is defined for all $x \in \F_{q}$ as 
\[
\sigma(x) = g^7 x^8 + g^3.
\]

Since $k=3$, we define $\psi(x) := \sigma^2 (x)$ for all $x \in \Pro (\F_{q})$. We want to find $c, d \in \overline{\F_{q}}$ such that
\[
\theta^3_{c,d} (x) = \psi(x)
\]
for all $x \in \Pro (\F_{q})$. More explicitly, we have to solve the equations
\begin{align*}
c^{s_3} & = (g^7)^9,\\
\sum_{i=0}^{2} c^{s_i} d^{4^i} & = g^7 (g^3)^8+g^3,
\end{align*}
namely
\begin{align*}
c^{21} & = g^{63},\\
d + c d^4 + c^5 d^{16} & = g^{31}+g^3.
\end{align*}

The first equation is satisfied for $c = g^3$, while one solution of the second equation is $d = g^{15}$. We define the map
\[
\theta(x) := \theta_{c,d} (x) =  \begin{cases}
g^3 x^4+g^{15} & \text{if $x \in \F_{q}$,}\\
\infty & \text{if $x = \infty$.}
\end{cases}
\]

The map $\theta$ is related to the duplication map defined over the curve 
\[
E : y^2 + g^{14} y = x^3 + g^6 x.
\] 

We consider the group 
\[ 
E(\Fq) \cong \Z / 33 \Z.
\] 

Using the notations of Section \ref{sec_length} we have that $n_2 = 33$. The set of positive divisors of $33$ is
\[
D := \{ 1, 3, 11, 33 \}.
\]

For each $d \in D$ we can find the smallest $k \in \N^*$ such that $2^k \approx 1 \bmod d$. Such a $k$ corresponds to a possible length of an orbit in $G_q (\theta)$. We collect the values of $k$ in the set  
\[
K := \{1, 5 \}.
\]

According to the notations of Lemma \ref{lem_123} we have that $l_1 \in K$ and $l_3 = l_1$. Since $k$ is odd, the possible length of an orbit $O (\sigma, x_0)$, where $x_0$ is the $x$-coordinate of a point in $E(\Fq)$, is $1, 2, 5$ or $10$. 

We can extend our investigation to the group 
\[
E(\Fqt) \cong \Z / 33 \Z \times \Z / 33 \Z.
\]

We have $n_1 = n_2 = 33$. Moreover, the pairs of divisors of $(n_1, n_2)$ are $(d_1, d_2)$ with $d_1, d_2 \in D$. For any such a pair we can compute the smallest $k \in \N^*$ such that 
\begin{align*}
2^k \equiv 1 \bmod d_1 \ & \text{ and  } \ 2^k \equiv 1 \bmod d_2\\
& \ \text{ or } \\
2^k \equiv -1 \bmod d_1 \ & \text{ and  } \ 2^k \equiv -1 \bmod d_2
\end{align*}

The values of $k$ we get are the elements of 
\[
K := \{1, 2, 5,10 \}.
\]

Correspondingly, the possible length of an orbit $O(\sigma, x_0)$ with $x_0 \in \Pro (\Fq)$ is $1, 2, 4, 5, 10$ or $20$.

Below we represent the graph $G_q (\sigma)$. 
\begin{center}
\begin{picture}(50, 15)(0,-5)
    \gasset{Nw=5,Nh=5,Nmr=2.5,curvedepth=0}
    \thinlines
    \footnotesize
    \node(N1)(0,0){$0$}
    \node(N2)(10,0){$13$}
    \node(N3)(20,0){$27$}
    \node(N4)(30,0){$1$}
    \node(N5)(40,0){$26$}

    \drawedge(N1,N2){}
    \drawedge(N2,N3){}
    \drawedge(N3,N4){}
    \drawedge(N4,N5){}
    
    \gasset{curvedepth=6}
    \drawedge(N5,N1){}
\end{picture}
\end{center}

\begin{center}
\begin{picture}(50, 15)(0,-5)
    \gasset{Nw=5,Nh=5,Nmr=2.5,curvedepth=0}
    \thinlines
    \footnotesize
    \node(N1)(0,0){$2$}
    \node(N2)(10,0){$11$}
    \node(N3)(20,0){$20$}
    \node(N4)(30,0){$19$}
    \node(N5)(40,0){$21$}

    \drawedge(N1,N2){}
    \drawedge(N2,N3){}
    \drawedge(N3,N4){}
    \drawedge(N4,N5){}
    
    \gasset{curvedepth=6}
    \drawedge(N5,N1){}
\end{picture}
\end{center}

\begin{center}
\begin{picture}(50, 15)(0,-5)
    \gasset{Nw=5,Nh=5,Nmr=2.5,curvedepth=0}
    \thinlines
    \footnotesize
    \node(N1)(0,0){$3$}
    \node(N2)(10,0){$29$}
    \node(N3)(20,0){$14$}
    \node(N4)(30,0){$15$}
    \node(N5)(40,0){`0'}

    \drawedge(N1,N2){}
    \drawedge(N2,N3){}
    \drawedge(N3,N4){}
    \drawedge(N4,N5){}
    
    \gasset{curvedepth=6}
    \drawedge(N5,N1){}
\end{picture}
\end{center}

\begin{center}
\begin{picture}(50, 15)(0,-5)
    \gasset{Nw=5,Nh=5,Nmr=2.5,curvedepth=0}
    \thinlines
    \footnotesize
    \node(N1)(0,0){$4$}
    \node(N2)(10,0){$5$}
    \node(N3)(20,0){$17$}
    \node(N4)(30,0){$12$}
    \node(N5)(40,0){$25$}

    \drawedge(N1,N2){}
    \drawedge(N2,N3){}
    \drawedge(N3,N4){}
    \drawedge(N4,N5){}
    
    \gasset{curvedepth=6}
    \drawedge(N5,N1){}
\end{picture}
\end{center}

\begin{center}
\begin{picture}(50, 15)(0,-5)
    \gasset{Nw=5,Nh=5,Nmr=2.5,curvedepth=0}
    \thinlines
    \footnotesize
    \node(N1)(0,0){$6$}
    \node(N2)(10,0){$28$}
    \node(N3)(20,0){$22$}
    \node(N4)(30,0){$24$}
    \node(N5)(40,0){$7$}

    \drawedge(N1,N2){}
    \drawedge(N2,N3){}
    \drawedge(N3,N4){}
    \drawedge(N4,N5){}
    
    \gasset{curvedepth=6}
    \drawedge(N5,N1){}
\end{picture}
\end{center}

\begin{center}
\begin{picture}(50, 15)(0,-5)
    \gasset{Nw=5,Nh=5,Nmr=2.5,curvedepth=0}
    \thinlines
    \footnotesize
    \node(N1)(0,0){$8$}
    \node(N2)(10,0){$30$}
    \node(N3)(20,0){$9$}
    \node(N4)(30,0){$16$}
    \node(N5)(40,0){$23$}

    \drawedge(N1,N2){}
    \drawedge(N2,N3){}
    \drawedge(N3,N4){}
    \drawedge(N4,N5){}
    
    \gasset{curvedepth=6}
    \drawedge(N5,N1){}
\end{picture}

\begin{picture}(40, 20)(0,-10)
    \gasset{Nw=5,Nh=5,Nmr=2.5,curvedepth=0}
    \thinlines
    \footnotesize
    \node(N1)(0,0){$10$}
    \node(N2)(15,0){$18$}
    \node(N3)(30,0){$\infty$}
   
    \gasset{curvedepth=4}
    \drawloop[loopangle=-90](N1){}
    \drawloop[loopangle=-90](N2){}
    \drawloop[loopangle=-90](N3){}
\end{picture}
\end{center}

\end{example}

\section{Dynamics of the maps $(a x^{2^k}+b)^{-1}$}\label{sec_x2k1}
In this section we study the dynamics of the maps $(a x^{2^k}+b)^{-1}$, namely the maps $\psi_{a,b,k}$ defined in Section \ref{sec_pre}, where $a,b \in \Fn$ with $a \neq 0$ and $k \in \N^*$.

We can find an invertible map $\tau$ defined on $\Pro(\cFn)$ such that $\psi_{a,b,k}$ is conjugated to $\theta_{c,0,k}$ through $\tau$ for some $c \in \cFn$, namely 
\[
(\tau^{-1} \circ \psi_{a,b,k} \circ \tau) (x) = \theta_{c,0,k} (x)
\]
or equivalently
\[
(\psi_{a,b,k} \circ \tau) (x) = (\tau \circ \theta_{c,0,k}) (x) 
\]
for all $x \in \Pro(\cFn)$.

We construct the map $\tau$ as follows, for some $c_1, c_2, c_3 \in \cFn$ with $c_3 + c_1 c_2 \neq 0$:
\[
\tau(x) := \begin{cases}
\dfrac{1}{c_2} & \text{if $x = \infty$,} \\[1ex]
\infty & \text{if $c_2 x + c_3 = 0$,}\\
0 & \text{if $x = c_1$,}\\
\dfrac{x+c_1}{c_2 x+c_3} & \text{otherwise.}\\
\end{cases}
\]

Now we find the values of $c_1, c_2, c_3$ and $c$.
Let $x \in \cFn$ be an element such that $x \neq c_1$ and $c_2 x + c_3 \neq 0$. If $q :=2^k$, then 
\[
(\psi_{a,b,k} \circ \tau) (x) = (\tau \circ \theta_{c,0,k}) (x) 
\]
if and only if
\[
\dfrac{1}{a \left( \dfrac{x+c_1}{c_2 x+c_3}  \right)^{q}+b} = \dfrac{c x^{q} + c_1}{c_2 c x^{q} + c_3}.
\]
Unravelling the left-hand-side of this latter equation we get 
\[
\dfrac{c_2^{q} x^{q} +  c_3^{q}}{(a+b c_2^{q})x^{q}+ (a c_1^{q} + b c_3^{q})} = \dfrac{c x^q + c_1}{c_2 c x^{q} + c_3}.
\]

The two rational functions above are equal if we find $c, c_1, c_2, c_3$ in $\cFn$ such that
\begin{equation}\label{eq_c123}
\begin{cases}
c = c_2^q\\
c_1 = c_3^{q} \\
c_2 c = a + b c_2^q \\
c_3 = a c_1^q + b c_3^{q}
\end{cases}
\end{equation}

If we plug $c := c_2^q$ in $c_2 c = a + b c_2^q$ we get the equation
\[
c_2^{q+1} + b c_2^q + a = 0
\]
which is satisfied for some $c_2 \in \cFn$ with $c_2 \neq 0$.
We notice in passing that $c_3 + c_1 c_2 = 0$ if and only if $c_3 + c_2 c_3^q  = 0$,
namely if and only if $c_3$ is a root of  the polynomial
\[
u (x) := x + c_2 x^q \in \cFn [x].
\]

If we plug $c_1 := c_3^q$ in  $c_3  = a c_1^q + b c_3^q$ we get the equation
\[
a c_3^{q^2} + b c_3^q + c_3 = 0, 
\]
which is satisfied if and only if $c_3$ is a root of the polynomial
\[
v(x) := a x^{q^2} + b x^q + x \in \F_{2^n} [x].
\]

Since $u(x)$ and $v(x)$ are $q$-polynomial and $u'(x) = v'(x) \neq 0 $,  they have respectively $q$ and $q^2$ distinct roots in $\cFn$ (see \cite[Theorem 3.50]{lid}). Therefore
\[
c_3 + c_1 c_2 	\neq 0 
\]
for some $c_3 \in \cFn$. We also notice that, if the equations (\ref{eq_c123}) are satisfied, then $c_2, c_3 \neq 0$.

We should also check that 
\[
\psi_{a,b,k} (\tau(x)) = \tau (\theta_{c,0,k}(x)) 
\]
for $x \in \{c_1, \frac{c_3}{c_2} \}$. We discuss separately some cases.

\begin{itemize}[leftmargin=*]
\item If $b \neq 0$ and $x = c_1$, then 
\begin{align*}
\psi_{a,b,k} (\tau(x)) & = \psi_{a,b,k} (0) = \frac{1}{b}, \\
\tau (\theta_{c,0,k}(x)) & = \tau (c c_1^q) = \frac{c c_1^q +c_1}{c_2 c c_1^q + c_3} = \frac{c_2^q c_1^q + c_3^q}{a c_1^q + b c_2^q c_1^q + a c_1^q + b c_3^q} \\
& = \frac{c_2^q c_1^q + c_3^q}{b (c_2^q c_1^q + c_3^q)} = \frac{1}{b}.
\end{align*}
\item If $b = 0$ and $x = c_1$, then $c_2 = \frac{a}{c}$, $c_3 = a c_1^q$ and $\frac{c_3}{c_2} = c c_1^q$. Therefore
\begin{align*}
\psi_{a,b,k} (\tau(x)) & = \psi_{a,b,k} (0) = \infty, \\
\tau (\theta_{c,0,k}(x)) & = \tau (c c_1^q) = \infty.
\end{align*}
\item If $x = \frac{c_3}{c_2}$, then 
\begin{align*}
\psi_{a,b,k} (\tau(x)) & = \psi_{a,b,k} (\infty) = 0, \\
\tau (\theta_{c,0,k}(x)) & = \tau (c_1) = 0.
\end{align*}
\end{itemize}

\subsection{Fixed points}
If $m \in \N^*$ and $c \in \F^*_{2^m}$, then the map $\theta : = \theta_{c,0,k}$ has at least two fixed points in $\Pro (\Fm)$ since
\[
\theta (0) = 0 \quad \text{ and } \quad \theta (\infty) = \infty.
\]

For a map $\psi := \psi_{a,b,k}$, defined over $\Pro(\Fm)$ and conjugated to $\theta$ through a suitable map $\tau$ with $c_1, c_2, c_3 \in \Fm$, we can prove some results on the number of fixed points or equivalently on the number of cycles of length $1$ in $G_{2^m} (\psi)$.

\begin{theorem}\label{thm_fix}
Let  $d:= \gcd(2^k-1, 2^m-1)$.
\begin{enumerate}[leftmargin=*]
\item The elements $\frac{c_1}{c_3}$ and $\frac{1}{c_2}$ are fixed points for the map $\psi$.
\item If $(c^{-1})^{\frac{2^m-1}{d}} \neq 1$, then $\psi$ has exactly two fixed points in $\Pro(\Fm)$.
\item If $(c^{-1})^{\frac{2^m-1}{d}} = 1$, then $\psi$ has $d+2$ fixed points in $\Pro(\Fm)$. 
\end{enumerate}
\end{theorem}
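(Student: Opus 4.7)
The key tool is the conjugation relation $\psi \circ \tau = \tau \circ \theta$ established in the preceding discussion, together with Proposition \ref{ros_lemma}. I would first observe that, since $c_1, c_2, c_3 \in \Fm$ and $c_3 + c_1 c_2 \neq 0$, the Möbius transformation $\tau$ restricts to a bijection $\Pro(\Fm) \to \Pro(\Fm)$. Because $\tau$ is a bijection, $y \in \Pro(\Fm)$ is a fixed point of $\psi$ if and only if $y = \tau(x)$ for some fixed point $x \in \Pro(\Fm)$ of $\theta = \theta_{c,0,k}$. Counting fixed points of $\psi$ in $\Pro(\Fm)$ thus reduces to counting fixed points of $\theta$ in $\Pro(\Fm)$ and transporting them by $\tau$.

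Next I would enumerate the fixed points of $\theta$. Clearly $\theta(0) = 0$ and $\theta(\infty) = \infty$, producing the two fixed points $\tau(0) = c_1/c_3$ and $\tau(\infty) = 1/c_2$ of $\psi$; these are distinct precisely because $c_3 + c_1 c_2 \neq 0$ (in characteristic two), which settles part (1). Every other fixed point $x \in \Fm^*$ of $\theta$ satisfies $c x^{2^k} = x$, i.e.
\[
x^{2^k - 1} = c^{-1}.
\]

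Now I would invoke Proposition \ref{ros_lemma} with $n := 2^k - 1$ and ambient group $\Fm^*$, so that $d = \gcd(2^k-1, 2^m-1)$. The proposition tells us that the equation $x^{2^k-1} = c^{-1}$ has no solutions in $\Fm^*$ when $(c^{-1})^{(2^m-1)/d} \neq 1$, and exactly $d$ distinct solutions otherwise. Transporting via $\tau$, which is injective on $\Pro(\Fm)$, these $d$ solutions (all lying in $\Fm^* \setminus \{0, \infty\}$) give $d$ further fixed points of $\psi$, distinct from $c_1/c_3$ and $1/c_2$. This yields the totals of $2$ and $d+2$ fixed points claimed in parts (2) and (3) respectively.

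The main (very mild) obstacle is bookkeeping: one must verify that $\tau$ really is well-defined as a bijection on $\Pro(\Fm)$ (using $c_3 \neq 0$ and $c_2 \neq 0$, noted already in the conjugation discussion, together with the non-vanishing of the Möbius determinant $c_3 + c_1 c_2$), and that the three groups of fixed points $\{c_1/c_3\}$, $\{1/c_2\}$, and $\tau(\{x \in \Fm^*: x^{2^k-1} = c^{-1}\})$ are pairwise disjoint, which follows immediately from the injectivity of $\tau$ since the preimages $0$, $\infty$, and the solutions in $\Fm^*$ are obviously distinct in $\Pro(\Fm)$. No deeper argument is required.
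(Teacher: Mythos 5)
Your proposal is correct and follows essentially the same route as the paper: use the conjugacy $\psi\circ\tau=\tau\circ\theta_{c,0,k}$ to transport fixed points, identify $0$ and $\infty$ as the two obvious fixed points of $\theta$ giving $\tau(0)=c_1/c_3$ and $\tau(\infty)=1/c_2$, and count the remaining ones via the equation $x^{2^k-1}=c^{-1}$ and Proposition \ref{ros_lemma}. The only difference is that you spell out the verification that $\tau$ restricts to a bijection of $\Pro(\Fm)$, which the paper leaves implicit.
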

\begin{proof}
We notice that the map $\tau$ establishes a one-to-one correspondence between the fixed points of $\psi$ and $\theta$.
Now we prove separately the statements.
\begin{enumerate}[leftmargin=*]
\item Since $\tau(0) = \frac{c_1}{c_3}$ and $\tau (\infty) = \frac{1}{c_2}$, we have that $\frac{c_1}{c_3}$ and $\frac{1}{c_2}$ are fixed points for $\psi_{a,b,k}$. Moreover $\frac{c_1}{c_3} \neq \frac{1}{c_2}$ because $c_1 c_2 + c_3 \neq 0$.
\item  If $x \in \F^*_{2^m}$, then 
\[
c x^{2^k} = x \Leftrightarrow x^{2^k-1} = c^{-1}.
\]
According to Proposition \ref{ros_lemma}, this latter equation has no solution in $\F_{2^m}$. Hence the only fixed points of $\psi$ are $\frac{c_1}{c_3}$ and $\frac{1}{c_2}$.
\item Similarly to the previous item, in this case the equation $x^{2^k-1} = c^{-1}$ has $d$ distinct solutions in $\F_{2^m}$. Since $0$ and $\infty$ are fixed points of $\theta$, we get the result. \qedhere
\end{enumerate}
\end{proof}

\begin{example}
Let $g$ a generator of $\F^*_{2^5}$. We consider the map $\psi := \psi_{g,g^2,2}$, whose image for all but two points of $\Pro(\F_{2^5})$ is 
\[
\psi(x) = \dfrac{1}{g x^4 + g^2}.
\]

Using GAP \cite{gap} we find the following values for $c_1$, $c_2$, $c_3$ and $c$:
\[
c_1 := g; \quad c_2 := g^3; \quad c_3 := g^8; \quad c := g^{12}.
\]

We notice that $m = 5$ and $k = 2$ according to the notations of Theorem \ref{thm_fix}. Since $2^k-1$ and $2^m-1$ are coprime and $(c^{-1})^{2^m-1} = 1$, the map $\psi$ has $3$ fixed points in $\Pro (\Fm)$.

The graph $G_{2^5} (\psi)$ is represented in the following figure.
\begin{center}
\begin{picture}(50, 15)(0,-5)
    \gasset{Nw=5,Nh=5,Nmr=2.5,curvedepth=0}
    \thinlines
    \footnotesize
    \node(N1)(0,0){$0$}
    \node(N2)(10,0){$12$}
    \node(N3)(20,0){$20$}
    \node(N4)(30,0){$30$}
    \node(N5)(40,0){$1$}

    \drawedge(N1,N2){}
    \drawedge(N2,N3){}
    \drawedge(N3,N4){}
    \drawedge(N4,N5){}
    
    \gasset{curvedepth=6}
    \drawedge(N5,N1){}
\end{picture}
\end{center}

\begin{center}
\begin{picture}(50, 15)(0,-5)
    \gasset{Nw=5,Nh=5,Nmr=2.5,curvedepth=0}
    \thinlines
    \footnotesize
    \node(N1)(0,0){$2$}
    \node(N2)(10,0){$7$}
    \node(N3)(20,0){$23$}
    \node(N4)(30,0){$26$}
    \node(N5)(40,0){$25$}

    \drawedge(N1,N2){}
    \drawedge(N2,N3){}
    \drawedge(N3,N4){}
    \drawedge(N4,N5){}
    
    \gasset{curvedepth=6}
    \drawedge(N5,N1){}
\end{picture}
\end{center}

\begin{center}
\begin{picture}(50, 15)(0,-5)
    \gasset{Nw=5,Nh=5,Nmr=2.5,curvedepth=0}
    \thinlines
    \footnotesize
    \node(N1)(0,0){$3$}
    \node(N2)(10,0){$10$}
    \node(N3)(20,0){$9$}
    \node(N4)(30,0){$19$}
    \node(N5)(40,0){$15$}

    \drawedge(N1,N2){}
    \drawedge(N2,N3){}
    \drawedge(N3,N4){}
    \drawedge(N4,N5){}
    
    \gasset{curvedepth=6}
    \drawedge(N5,N1){}
\end{picture}
\end{center}

\begin{center}
\begin{picture}(50, 15)(0,-5)
    \gasset{Nw=5,Nh=5,Nmr=2.5,curvedepth=0}
    \thinlines
    \footnotesize
    \node(N1)(0,0){$4$}
    \node(N2)(10,0){$5$}
    \node(N3)(20,0){$18$}
    \node(N4)(30,0){$13$}
    \node(N5)(40,0){$21$}

    \drawedge(N1,N2){}
    \drawedge(N2,N3){}
    \drawedge(N3,N4){}
    \drawedge(N4,N5){}
    
    \gasset{curvedepth=6}
    \drawedge(N5,N1){}
\end{picture}
\end{center}

\begin{center}
\begin{picture}(50, 15)(0,-5)
    \gasset{Nw=5,Nh=5,Nmr=2.5,curvedepth=0}
    \thinlines
    \footnotesize
    \node(N1)(0,0){$6$}
    \node(N2)(10,0){$17$}
    \node(N3)(20,0){$27$}
    \node(N4)(30,0){$16$}
    \node(N5)(40,0){$11$}

    \drawedge(N1,N2){}
    \drawedge(N2,N3){}
    \drawedge(N3,N4){}
    \drawedge(N4,N5){}
    
    \gasset{curvedepth=6}
    \drawedge(N5,N1){}
\end{picture}
\end{center}

\begin{center}
\begin{picture}(50, 15)(0,-5)
    \gasset{Nw=5,Nh=5,Nmr=2.5,curvedepth=0}
    \thinlines
    \footnotesize
    \node(N1)(0,0){$22$}
    \node(N2)(10,0){$8$}
    \node(N3)(20,0){$\infty$}
    \node(N4)(30,0){`0'}
    \node(N5)(40,0){$29$}

    \drawedge(N1,N2){}
    \drawedge(N2,N3){}
    \drawedge(N3,N4){}
    \drawedge(N4,N5){}
    
    \gasset{curvedepth=6}
    \drawedge(N5,N1){}
\end{picture}
\end{center}

\begin{center}
\begin{picture}(50, 18)(-5,-10)
    \gasset{Nw=5,Nh=5,Nmr=2.5,curvedepth=0}
    \thinlines
    \footnotesize
    \node(N1)(0,0){$14$}
    \node(N2)(15,0){$24$}
    \node(N3)(30,0){$28$}
   
    \gasset{curvedepth=4}
    \drawloop[loopangle=-90](N1){}
    \drawloop[loopangle=-90](N2){}
    \drawloop[loopangle=-90](N3){}
\end{picture}
\end{center}
\end{example}

The map $\psi$ is conjugated to the map $\theta := \theta_{c,0,2}$ defined over $\Pro(\F_{2^5})$. The map $\theta$ describes the $x$-coordinate of the duplication map defined over the elliptic curve
\[
E : y^2 + g^{25} y = x^3.
\]

Since $E(\F_{2^5}) \cong \Z / 33 \Z$ and the positive divisors of $33$ are $1, 3, 11$ and $33$, the rational points of $E(\F_{2^5})$ give rise to cycles of lengths $1$ and $5$.

Moreover $E(\F_{2^{10}}) \cong \Z / 33 \Z \times \Z / 33 \Z$. We have that $n_1 = n_2 = 33$. Consider the pairs of positive divisors $(d_1, d_2)$ of $(n_1, n_2)$. We still get that the possible cycle lengths of elements which are $x$-coordinates (in $\F_{2^5}$) of rational points in $E(\F_{2^{10}})$ are $1$ and $5$.

\end{document}